\newtheorem{proposition}{Proposition}
\newenvironment{prop}{\begin{samepage}\begin{proposition}}{\end{proposition}\end{samepage}}
\newtheorem{remark}{Remark}
\begin{document}
%
\title{\LARGE \bf
Novel Approach Towards Global Optimality of Optimal Power Flow Using Quadratic Convex Optimization
}


\author{Hadrien Godard$^{1,2,3}$, Sourour Elloumi$^{2,3}$, Am\'elie Lambert$^{2}$, Jean Maeght$^{1}$ and Manuel Ruiz$^{1}$
\thanks{$^{1}$ R\&D Division, RTE (French TSO), 9 rue de la Porte de Buc, 78000, Versailles, France}
\thanks{$^{2}$CEDRIC, CNAM, 292 rue Saint-Martin, 75003, Paris, France}
\thanks{$^{3}$UMA, ENSTA, 828 Boulevard des Maréchaux, 91120, Palaiseau, France}
}


\maketitle


\begin{abstract}
Optimal Power Flow (OPF) can be modeled as a non-convex Quadratically Constrained Quadratic Program (QCQP).
Our purpose is to solve OPF to global optimality.
To this end, we specialize the Mixed-Integer Quadratic Convex Reformulation method (\texttt{MIQCR}) to (OPF).

This is a method in two steps.
First, a Semi-Definite Programming (SDP) relaxation of (OPF) is solved.
Then the optimal dual variables of this relaxation are used to reformulate OPF into an equivalent new quadratic program, where all the non-convexity is moved to one additional constraint.
In the second step, this reformulation is solved within a branch-and-bound algorithm, where at each node a \textit{quadratic and convex} relaxation of the reformulated problem, obtained by relaxing the non-convex added constraint, is solved.
The key point of our approach is that the lower bound at the root node of the branch-and-bound tree is equal to the SDP relaxation value.

We test this method on several OPF cases, from two-bus networks to more-than-a-thousand-buses networks from the MATPOWER repository.
Our first results are very encouraging.
\end{abstract}



\section{Introduction}

The Optimal Power Flow (OPF) problem deals with determining power production at different nodes of an electric network where a production cost is minimized.
If we model this network with an AC framework, the optimization problem is quadratic and non-convex~\cite{low}.

(OPF) can be solved with general purpose solvers such as Baron~\cite{baron}.
A branch-and-bound algorithm specialized to (OPF) has been introduced in~\cite{gopala} using the SDP rank relaxation~\cite{lavaei2012zero} as a lower bound provider.
More recently~\cite{sun} introduces an SOCP-based branch-and-bound algorithm.

Our goal is to design a branch-and-bound algorithm that closes the gap between lower and upper bounds.
The lower bound is obtained with the rank relaxation and the upper bound is obtained with a feasible point computed, for instance, by an interior point method.
It has been already observed that this gap is quite small for (OPF) instances as these lower and upper bounds are very sharp in general~\cite{arxiv}.

To this end, we work on a specialization of \texttt{MIQCR}~\cite{lambert1,lambert2,lambert3} which is a method designed to solve non-convex and mixed integer quadratic programs to global optimality.
\texttt{MIQCR} works in two steps: first a semi-definite relaxation is used to reformulate the problem into another quadratic program.
Then this reformulated problem is solved within a branch-and-bound framework where at each node a \textit{quadratic and convex} program is solved to get a local lower bound.
A key advantage of this method is that it requires the solution of only one SDP relaxation as a preprocessing step.
Then, the strength of this SDP lower bound is captured onto quadratic and convex programming.
However, the solution of this large SDP is the bottleneck of \texttt{MIQCR} when handling larger instances.
The contribution of this paper is a specialization of MIQCR to (OPF) where we prove that we can reach the strengthened lower bound of the MIQCR method by solving a smaller semidefinite relaxation than in the original approach.

\bigskip

In this paper, we adapt method \texttt{MIQCR} to the OPF problem.
In Section~2, we recall the formulation of the OPF problem as a quadratic program.
In Section~3 we introduce the semi-definite relaxation used in our algorithm, known as the rank (or Shor) relaxation of the OPF problem~\cite{lavaei2012zero}.
We also give in this section a new proof of the strong duality of the rank relaxation.
Then, in Section~4 and in Section~5, we present our contribution: the specialization of \texttt{MIQCR} to solve the OPF problem.
In Section~4, we prove that solving a smaller SDP relaxation than in the original \texttt{MIQCR} approach is sufficient to reformulate the OPF problem and to attain the sharp rank relaxation bound at the root node of the branch-and-bound tree.
Indeed, solving the dual of the rank relaxation is equivalent to finding the best quadratic reformulation of an OPF problem among the \texttt{MIQCR} family of reformulations.
Moreover, in Section~5, we present a branch-and-bound framework to solve the reformulated problem.
Finally, in Section~6, we illustrate our method by computational experiments on small and medium-sized instances of OPF problems.
Section~7 draws a conclusion.

\section*{Notations}

\begin{itemize}
\item $\textbf{i}$ is the complex number whose real part is null and imaginary one equals one.
\item $\mathbb{S}_{m}(\mathbb{R})$ is the set of symmetric matrices on $\mathbb{R}^{m}$.
\item $0_{m}$ is the zero matrix of size $m$.
\item $Id_m$ is the identity matrix of size $m$.
\item $\lambda_{min}(M)$ is the smallest eigenvalue of a symmetric matrix $M$.
\item $\texttt{v}(P)$ is the optimal value of an optimization problem $(P)$.
\item $<A,B>$ is the canonic scalar product between matrices $A$ and $B$.
\item $|E|$ is the cardinal of a set $E$.
\end{itemize}

\section{A quadratic formulation of the OPF problem}

Driving power flows from producers to consumers in an electric network constitutes the OPF problem.
Usually the amount of consumed electric power is known at each node of the network.
On the contrary the production is unknown and OPF deals with determining its value.
The goal is to minimize electric power production costs under the constraint that the demand is satisfied at each node, and that active and reactive powers are box constrained for each production unit.
Because real electric transmission networks work with alternative current, one must consider voltage at each node in order to compute where the power flows through the network.
Engineering limits such as bounds on voltage magnitude are also considered as constraints.
See the bus injection model in~\cite{low} for more precisions on this topic.

We only consider a cost linearly linked to the power produced at each unit.
Note that a linear cost on the power production handles the case where one tries to minimize losses on the network.
The following program is a classic model for the (OPF) problem:

\begin{numcases}{} 
\displaystyle{\min_{V \in \mathbb{C}^{n}, P^G \in \mathbb{R}^{n^g}, Q^G \in \mathbb{R}^{n^g}}} c'P^G \label{obj}\\
\mbox{s.t.} \nonumber \\
\quad P^G_i + \textbf{i} Q^G_i = V^* Y_i V, \quad i \in N^G, \label{Gbilan}\\
\quad -P^D_i - \textbf{i} Q^D_i = V^* Y_i V, \quad i \in N^D, \label{Dbilan}\\
\quad P^{min}_i \leq P^G_i \leq P^{max}_i, \quad i \in N^G, \label{Pbounds}\\
\quad Q^{min}_i \leq Q^G_i \leq Q^{max}_i, \quad i \in N^G, \label{Qbounds} \\
\quad (V^{min}_i)^2 \leq |V_i|^2 \leq (V^{max}_i)^2, \quad i \in N. \label{Vbounds}
\end{numcases}

Where $N$ is the set of network nodes and $n$ their number.
$N^G$, respectively $N^D$, is the set of production, respectively consumption, nodes and $n^g = |N^G|$.   
Variables $V$ are the complex voltages of network buses.
Variables $P^G$ are the generated active powers at production buses.
Variables $Q^G$ are the generated reactive powers at production buses.
$c$ is the vector of linear costs, where $c_i$ is the production cost at node $i \in N^G$.
$Y_i$ is the complex admittance matrix at node $i \in N$.
$P^{min}$ and $P^{max}$ are lower and upper bounds on active generated powers.
$Q^{min}$ and $Q^{max}$ are lower and upper bounds on reactive generated powers. 
$V^{min}$ and $V^{max}$ are lower and upper bounds on voltage magnitudes.

The objective function (\ref{obj}) is linear relatively to active produced powers $P^G$.
Constraints (\ref{Gbilan}) (resp. (\ref{Dbilan})) are power balance equations at production (resp. consumption) nodes.
Constraints (\ref{Pbounds}), (resp. (\ref{Qbounds})), are bounds on active (resp. reactive), produced powers.
Constraints (\ref{Vbounds}) are bounds on voltage magnitudes.

Substituting voltages to other variables, from equations (\ref{Gbilan}), one can see that (OPF) can be modeled as a pure quadratic program, without linear terms, in the real and imaginary parts of voltages.

To simplify the notations, we rewrite the above problem in a more abstract style as the following quadratically constrained quadratic program with $2n$ variables $(OPF)$:

\begin{numcases}{(OPF)} 
\displaystyle{\min_{x \in \mathbb{R}^{2n}}} x^t C x \nonumber\\
\mbox{s.t.} \nonumber \\
\quad  x^t A_k x  \leq  b_k, & $k \in K.$ \nonumber
\end{numcases}

Variables $x$ are the real and imaginary parts of the voltage at each network node.
$C \in \mathbb{S}_{2n}(\mathbb{R})$.
$K$ is the set of constraints indices. At each node there are two constraints that bound the voltage magnitude, and four other constraints that model the complex power balance.
Hence, the number of constraints is $6n$.
$\forall k \in K,  A_k \in \mathbb{S}_{2n}(\mathbb{R})$ and $b_k \in \mathbb{R}$.

The objective function and the constraints are convex if and only if matrices $C$ and $A_k$ are positive semi-definite (PSD) which is not the case for $(OPF)$.

In this formulation with continuous variables there are no natural lower and upper bounds on variables $x$.
However to perform a spatial branch-and-bound algorithm, initial lower and upper bounds ($\ell$ and $u$) on each variable are needed.

To obtain such bounds one can use the fact that the modulus of each complex voltage is upper-bounded. 
Those upper-bounds are also at the heart of the proofs of Propositions~1~and~2.

Suppose that $x_i$ is the real part of the complex voltage at node $i$, and $x_{i+n}$ its imaginary part.
By Constraints~(\ref{Vbounds}), we have:
 \begin{equation}\label{ball_constraints}x_i^2 + x_{i+n}^2 \leq (V_i^{max})^2.\end{equation}
\noindent It follows that:
$$ -V_i^{max} \leq x_i \leq V_i^{max} \textrm{ and } -V_i^{max} \leq x_{i+n} \leq V_i^{max}.$$
Below, we take:
\begin{align}
\label{l_def}\ell_i = \ell_{i+n} =  -V_i^{max},\\
\label{u_def}u_i = u_{i+n} =  V_i^{max},\\
\ell \leq x \leq u.\nonumber
\end{align}


\section{The rank relaxation of OPF}

In this section we recall the rank relaxation of $(OPF)$, that we call $(SDP)$:
\begin{numcases}{(SDP)} 
\displaystyle{\min_{X \in \mathbb{S}_{2n}(\mathbb{R})}}  <C,X> \nonumber\\
\mbox{s.t.} \nonumber \\
\quad   <A_k,X>  \leq  b_k,  & $k \in K$ \nonumber     \\
\quad  X \succeq  0.  \nonumber 
\end{numcases}

The dual of $(SDP)$ is:
\begin{numcases}{(DSDP)} 
\displaystyle \max_{\alpha \in \mathbb{R}^{|K|}}  \displaystyle \sum_{k \in K} - b_k \alpha_k \nonumber\\
\mbox{s.t.} \nonumber \\
\quad   C + \displaystyle \sum_{k \in K} \alpha_k A_k  \succeq  0, \nonumber     \\
\quad  \alpha_k  \geq  0, & $ k \in K. $ \nonumber 
\end{numcases}
\noindent where $\alpha_k$ is the dual variable associated with constraint $k$.

Strong duality for feasible OPFs has been already proved (see for instance~\cite{josz_these}). In this paper, we propose another proof based on the fact that the modulus of each complex voltage is bounded.
Parts of this proof will be used later to demonstrate Proposition 2.

\begin{prop}
If $(OPF)$ is feasible, there is no duality gap between $(SDP)$ and $(DSDP)$. In other words, strong duality holds for the rank relaxation of feasible OPF problems.
\end{prop}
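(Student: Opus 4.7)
My strategy is to apply Slater's condition to the dual problem $(DSDP)$: if $(DSDP)$ admits a strictly feasible point and $(SDP)$ is feasible, then the classical SDP strong duality theorem yields $v(SDP) = v(DSDP)$ together with attainment in $(SDP)$.

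Feasibility of $(SDP)$ follows directly from the hypothesis. Given any $x \in \mathbb{R}^{2n}$ feasible in $(OPF)$, the rank-one matrix $X := x x^t$ is positive semidefinite and satisfies $\langle A_k, X \rangle = x^t A_k x \le b_k$ for every $k \in K$, so $X$ is feasible for $(SDP)$ and in particular $v(SDP) \le x^t C x < +\infty$.

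The central step is to build multipliers $\alpha \ge 0$ such that $C + \sum_{k \in K} \alpha_k A_k \succ 0$, which is where the bounds~(\ref{ball_constraints}) on the voltage moduli enter. In the abstract formulation, the $n$ upper voltage-magnitude constraints $x_i^2 + x_{i+n}^2 \le (V_i^{max})^2$ are encoded by matrices of the form $E_i = e_i e_i^t + e_{i+n} e_{i+n}^t$, and these satisfy $\sum_{i=1}^{n} E_i = Id_{2n}$. Assigning a common value $\beta$ to the $n$ dual variables associated with these constraints and zero to all remaining $\alpha_k$, one obtains $C + \beta \, Id_{2n}$, which is strictly positive definite for any $\beta > -\lambda_{min}(C)$. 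This provides the required strictly feasible point of $(DSDP)$.

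Combining primal feasibility with dual Slater, strong duality for semidefinite programs gives $v(SDP) = v(DSDP)$ and attainment in the primal, which is exactly the claim. The only genuinely delicate point I expect is the bookkeeping step identifying, within the abstract representation $x^t A_k x \le b_k$, the indices $k \in K$ that correspond to the upper voltage-magnitude constraints and verifying the identity $\sum_i E_i = Id_{2n}$; once this is in place, the rest is a direct invocation of the classical Slater-based SDP duality result.
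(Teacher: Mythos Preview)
Your approach is essentially the paper's: show $(SDP)$ is feasible via $X=xx^t$, then exhibit a Slater point for $(DSDP)$ by exploiting $\sum_{i=1}^n A_i = Id_{2n}$ from the voltage-magnitude upper bounds. The one technical slip is that you set the dual multipliers on all \emph{other} constraints to zero; in the conic formulation of $(DSDP)$ the feasible cone is $\mathbb{S}_+^{2n}\times\mathbb{R}_+^{|K|}$, so a Slater point must lie in its interior, i.e.\ one needs $\alpha_k>0$ for every $k$, not just positive definiteness of $C+\sum_k\alpha_k A_k$. The paper handles this by first assigning $\tilde\alpha_k=1$ to every remaining constraint, forming $C'=C+\sum_{k>n}A_k$, and only then choosing the common value $\mu$ on the voltage-bound constraints large enough that $C'+\mu\,Id_{2n}\succ 0$. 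Your argument is repaired by the same device (replace $0$ by any fixed $\varepsilon>0$ and take $\beta>-\lambda_{\min}(C+\varepsilon\sum_{k>n}A_k)$), after which it coincides with the paper's proof.
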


\begin{proof}
We first prove that if $(OPF)$ is feasible then $(SDP)$ is feasible too.
Indeed from each feasible solution $\tilde{x}$ to $(OPF)$ one can build a feasible solution $\tilde{X} = \tilde{x} \tilde{x}^t$ to $(SDP)$.

Let us now prove that $(DSDP)$ is strictly feasible, i.e. finding $\tilde{\alpha} > 0$ satisfying $C+ \sum_{k\in K} \tilde{\alpha}_k A_k \succ 0$.

In the following we assume that the elements of $K$ are integers from 1 to $|K|$, and that the $n$ first elements of $K$ are the indices of voltage magnitude upper-bound constraints~(\ref{ball_constraints}) on the $n$ network nodes.
For $k$ from $1$ to $n$: all entries of $A_k$ are zeros except the $k$-th and $(k+n)$-th entries of the diagonal, which are equal to 1.
It follows that $\sum_{k=1,...,n} A_k = Id_{2n}$.

For $k > n$ : take $\tilde{\alpha}_k = 1 > 0$.
Consider the matrix \\ $C' = C+ \sum_{k>n} \tilde{\alpha}_k A_k$ .

For $k$ from $1$ to $n$ : \\
take $\tilde{\alpha}_k = \mu \geq 1 + \texttt{max}(0,-\lambda_{min}(C')) > 0$.

Then:
\begin{multline*}
C+ \sum_{k\in K} \tilde{\alpha}_k A_k = C + \sum_{k = 1,..,n} \tilde{\alpha}_k A_k + \sum_{k > n} \tilde{\alpha}_k A_k \\= C' + \mu Id_{2n} \succ 0.
\end{multline*}

\noindent As $\tilde{\alpha}$ has positive entries and $C+ \sum_{k\in K} \tilde{\alpha}_k A_k \succ 0$, $(DSDP)$ is strictly feasible.

To sum up, $(SDP)$ is feasible, $(DSDP)$ is strictly feasible, so strong duality holds.
\end{proof}

We have shown that strong duality holds for the rank relaxation of $(OPF)$, using the fact that the modulus of each complex voltage is bounded.
This is a special case of ball constraints on an optimization problem with complex variables, see~\cite{josz_these} to get more details on this subject.

$(SDP)$ relaxation of $(OPF)$ is known to give a sharp lower bound.
We want to point out that we study transmission networks which are not tree networks and are highly meshed.
Moreover we do not have a sufficient number of phase-shifter-transformers on network to use results from~\cite{net_top}.
Thus rank relaxation does not necessarily lead to an optimal solution.

In the next sections, we present a branch-and-bound algorithm that starts with this sharp lower bound which is based at each node on a \textit{quadratic and convex} relaxation of $(OPF)$.


\section{An equivalent formulation to $(OPF)$}

The first step of \texttt{MIQCR} consists in reformulating a QCQP like $(OPF)$ into an \textit{equivalent} quadratic problem that has a \textit{quadratic and convex} objective function, linear constraints, and additional variables $Y$ that are meant to satisfy quadratic constraint $Y = xx^t$.

Let $S \in \mathbb{S}_{2n}(\mathbb{R})^+$ be a positive semi-definite matrix. We reformulate $(OPF)$ as:

\begin{small}
\begin{numcases}{(OPF_{S})} 
\displaystyle \displaystyle \min_{x \in \mathbb{R}^{2n}, Y \in \mathbb{S}_{2n}(\mathbb{R})} x^tSx + <C-S,Y>  \nonumber\\
\mbox{s.t.} \nonumber \\
\quad  <A_k,Y>  \leq b_k, \quad k \in K \nonumber     \\
\label{bilinear}\quad  Y=xx^t, \\
\quad \ell \leq x \leq u. \nonumber
\end{numcases}
\end{small}

Observe that the reformulated objective and constraints functions have the same value as the original ones for a same $x$ if Constraints (\ref{bilinear}) are satisfied.
Moreover, the new objective function is convex since matrix $S$ is positive semi-definite. The constraints are now linear, and thus convex.
This is why, this reformulation is called a \textit{quadratic and convex} reformulation.

In $(OPF_{S})$ only Constraint (\ref{bilinear}) is non convex. In a way, all the non-convexity has been moved into this constraint.

To solve $(OPF_{S})$ to global optimality \texttt{MIQCR} uses a branch-and-bound algorithm where, at each node, we relax Constraint (\ref{bilinear}) and add the linear McCormick inequalities (\ref{mc1})-(\ref{mc4})~\cite{Cor76} to tighten the relaxation.

Therefore, at the first node of the branch-and-bound, the \textit{quadratic and convex} relaxation $(\overline{OPF}_{S})$ is solved.

\begin{footnotesize}
\begin{numcases}{(\overline{OPF}_{S})} 
\displaystyle \displaystyle \min_{x \in \mathbb{R}^{2n}, Y \in \mathbb{S}_{2n}(\mathbb{R})} x^tSx + <C-S,Y>  \nonumber\\
\mbox{s.t.} \nonumber \\
\quad  <A_k,Y>  \leq  b_k, \quad k \in K \nonumber     \\
\label{mc1}\quad Y_{ij} \leq u_jx_i + \ell_ix_j -\ell_iu_j, (i,j) \in E \\
\label{mc2}\quad Y_{ij} \leq \ell_jx_i + u_ix_j -u_i\ell_j, (i,j) \in E \\
\label{mc3}\quad Y_{ij} \geq u_jx_i + u_ix_j -u_iu_j, (i,j) \in E\\
\label{mc4}\quad Y_{ij} \geq \ell_jx_i + \ell_ix_j -\ell_i\ell_j, (i,j) \in E.
\end{numcases}
\end{footnotesize}

\noindent where $\ell,u$ are defined as in~(\ref{l_def}) and~(\ref{u_def}) and \\$E = \Big \{ (i,j) \in \{1,\ldots, 2n\}^2 \, : \, i \leq j \Big \}$.

Notice that in practice, the full variables matrix $Y$ is not considered.
Coefficient $Y_{ij}$ is considered if and only if the entry $(i,j)$ is non zero in a matrix among $\{C,A_1,\ldots,A_{|K|}\}$. 

Every positive semi-definite matrix gives a different reformulation. In the particular case where $S=0_{2n}$, the objective function is linear.
It is the linearization of $(OPF)$.
For example the Baron solver~\cite{baron} relies on this linearization within a branch-and-bound framework.

We are now interested in finding a "best" matrix $S$, i.e. a matrix $S$ which gives the largest lower bound at the root node of the branch-and-bound tree. That is to say, a matrix $S$ which maximizes the value of $(\overline{OPF}_{S})$.
More formally we want to solve:
$$ \displaystyle \max_{S\succeq 0} \texttt{v}(\overline{OPF}_S).$$
It is proved in~\cite{lambert2} that a best matrix $S^*$ can be computed from optimal dual variables of a semi-definite relaxation of $(OPF)$ which is $(SDP)$ but with additional constraints and variables to raise the McCormick's inequalities.

In this paper, we characterize $(SDP)$, which does not contain the McCormick inequalities, as the relaxation that can be used to compute a best positive semi-definite matrix.
The fact that the McCormick inequalities are redundant in the $(SDP)$ relaxation is a significant result and is the main difference between Proposition 2 and the result in~\cite{lambert2}.
As a consequence, to reformulate $(OPF)$, we have to solve a problem with a smaller size than the one solved in the original method \texttt{MIQCR}.
Moreover, we prove that the optimal value of this "best" \textit{quadratic and convex} relaxation is equal to the optimal value of $(SDP)$.

\begin{prop}
Let $\alpha^*$ be an optimal solution to $(DSDP)$, take:
	$$S^* =  C + \displaystyle \sum_{k \in K} \alpha_k^* A_k,$$

\noindent If $\ell$ and $u$ are defined by (\ref{l_def}) and (\ref{u_def}) we have:
$$\texttt{v}(SDP) = \texttt{v}(\displaystyle \max_{S\succeq 0} \texttt{v}(\overline{OPF}_S)) = \texttt{v}(\overline{OPF}_{S^*}).$$
\end{prop}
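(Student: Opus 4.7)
The plan is to derive the chain
$$\texttt{v}(SDP) \;\le\; \texttt{v}(\overline{OPF}_{S^*}) \;\le\; \max_{S \succeq 0}\texttt{v}(\overline{OPF}_S) \;\le\; \texttt{v}(SDP),$$
whose middle inequality is immediate because $S^* \succeq 0$ by dual feasibility of $\alpha^*$. It therefore suffices to prove the leftmost and rightmost inequalities.

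For $\texttt{v}(\overline{OPF}_{S^*}) \ge \texttt{v}(SDP)$, I would take any feasible $(x, Y)$ of $(\overline{OPF}_{S^*})$ and rewrite its objective using $C - S^* = -\sum_k \alpha_k^* A_k$, so that it becomes $x^t S^* x - \sum_k \alpha_k^* \langle A_k, Y \rangle$. The first summand is nonnegative since $S^* \succeq 0$. The second is bounded below by aggregating the constraints $\langle A_k, Y \rangle \le b_k$ with the nonnegative weights $\alpha_k^*$:
$$-\sum_{k \in K} \alpha_k^* \langle A_k, Y \rangle \;\ge\; -\sum_{k \in K} \alpha_k^* b_k \;=\; \texttt{v}(DSDP) \;=\; \texttt{v}(SDP),$$
where the last equality is the strong duality statement of Proposition~1. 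This gives the desired lower bound on the objective over every feasible point.

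For $\texttt{v}(\overline{OPF}_{S}) \le \texttt{v}(SDP)$ --- the key step that genuinely departs from the generic \texttt{MIQCR} argument of~\cite{lambert2} --- I would fix an arbitrary $S \succeq 0$, let $X^*$ be optimal for $(SDP)$, and verify that the candidate $(x, Y) = (0, X^*)$ is feasible for $(\overline{OPF}_S)$. The constraints $\langle A_k, Y \rangle \le b_k$ are inherited from $(SDP)$. At $x = 0$, under the symmetric bounds $\ell_i = -u_i$ of~(\ref{l_def})--(\ref{u_def}), the McCormick inequalities~(\ref{mc1})--(\ref{mc4}) all collapse to the single bound $|Y_{ij}| \le u_i u_j$. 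This bound is automatic for $X^*$: positive semidefiniteness gives $|X^*_{ij}| \le \sqrt{X^*_{ii} X^*_{jj}}$, while the lifted voltage magnitude constraints $X^*_{ii} + X^*_{(i+n)(i+n)} \le (V_i^{max})^2$ together with the nonnegativity of the diagonal of $X^*$ force $X^*_{ii} \le u_i^2$ for every index $i \in \{1,\ldots,2n\}$. The objective at this candidate is $\langle C - S, X^* \rangle = \texttt{v}(SDP) - \langle S, X^* \rangle \le \texttt{v}(SDP)$ because $\langle S, X^* \rangle \ge 0$.

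The main obstacle is conceptual rather than computational: one must notice that, precisely because the variable bounds are symmetric around zero, the McCormick envelopes evaluated at $x = 0$ reduce to an entrywise bound already implied by the PSD cone together with the voltage magnitude constraints that belong to $(SDP)$. This redundancy is exactly what permits using $(SDP)$ itself --- rather than the McCormick-augmented SDP of~\cite{lambert2} --- as the preprocessing problem producing a best reformulation matrix $S^*$.
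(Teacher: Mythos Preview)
Your proof is correct and follows essentially the same route as the paper: the circular chain of inequalities, the candidate $(x,Y)=(0,X^*)$, and the verification that the McCormick envelopes at $x=0$ reduce to $|X^*_{ij}|\le u_iu_j$, which is implied by $X^*\succeq 0$ together with the lifted voltage-magnitude constraints, are all exactly what the paper does. The one minor difference is in the inequality $\texttt{v}(SDP)\le\texttt{v}(\overline{OPF}_{S^*})$: the paper introduces an auxiliary relaxation $(LR_S)$ (that is, $(\overline{OPF}_S)$ with the McCormick constraints dropped) and uses LP duality to identify $\texttt{v}(DSDP)=\texttt{v}(LR_{S^*})\le\texttt{v}(\overline{OPF}_{S^*})$, whereas you reach the same conclusion by a direct constraint-aggregation (weak-duality) argument; your version is slightly more elementary but otherwise equivalent.
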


\begin{proof}
For each PSD matrix $S$, let us introduce the optimization problem:

\begin{small}
\begin{numcases}{(LR_{S})} 
\displaystyle \displaystyle \min_{x \in \mathbb{R}^{2n}, Y \in \mathbb{S}_{2n}(\mathbb{R})} x^tSx + <C-S,Y>  \nonumber\\
\mbox{s.t.} \nonumber \\
\quad  <A_k,Y>  \leq b_k, \quad k \in K \nonumber
\end{numcases}
\end{small}

$(LR_S)$ is the relaxation of $(\overline{OPF}_S)$ where inequalities (\ref{mc1})-(\ref{mc4}) have been dropped. Thus, it is a relaxation of $(OPF_S)$.

The proof is divided in two parts.

\begin{itemize}
\item First we prove that:
$$\texttt{v}(DSDP) = \texttt{v}(\displaystyle \max_{S\succeq 0} \texttt{v}(LR_S)) = \texttt{v}(LR_{S^*}).$$

Let us rewrite the dual of the SDP relaxation by introducing a slack matrix $S$:
\begin{numcases}{(DSDP)} 
\displaystyle \max_{S \succeq 0, \alpha \geq 0}  \displaystyle \sum_{k \in K} - b_k \alpha_k \nonumber\\
\mbox{s.t.} \nonumber \\
\quad   S = C + \displaystyle \sum_{k \in K} \alpha_k A_k. \nonumber
\end{numcases}

For a given $S \succeq 0$, the optimization problem in $\alpha$ is a linear program.
We replace it by its LP-dual and obtain the equivalent problem: 
\begin{numcases}
\displaystyle \max_{S \succeq 0} \displaystyle \min_{Y}  <C-S,Y> \nonumber\\
\mbox{s.t.} \nonumber \\
\quad  <A_k,Y>  \leq b_k, \quad k \in K. \nonumber
\end{numcases}

Now we can observe that as S is positive semi-definite, one can add $x^tSx$ to the objective function together with variables $x$.
Indeed $x$ will be equal to 0 in any optimal solution.
Therefore:
\begin{numcases}{\texttt{v}(DSDP) = \texttt{v}} 
\displaystyle \max_{S \succeq 0} \displaystyle \min_{x,Y}  x^tSx + <C-S,Y> \nonumber\\
\mbox{s.t.} \nonumber \\
\quad  <A_k,Y>  \leq b_k, \quad k \in K. \nonumber
\end{numcases}

We have proved:
$$\texttt{v}(DSDP) = \texttt{v}(\displaystyle \max_{S\succeq 0} \texttt{v}(LR_S)) = \texttt{v}(LR_{S^*}).$$

\item Now, in the second part, we prove that:
\begin{equation*}
\forall S \succeq 0, \texttt{v}(\overline{OPF}_{S}) \leq \texttt{v}(SDP) \leq \texttt{v}(\overline{OPF}_{S^*}).
\end{equation*}

From the first part, and by Proposition~1, it follows that $\texttt{v}(SDP) = \texttt{v}(LR_{S^*})$,
and, as $(LR_{S^*})$ is a relaxation of $\overline{OPF}_{S^*}$:
$$\texttt{v}(SDP) = \texttt{v}(LR_{S^*}) \leq \texttt{v}(\overline{OPF}_{S^*}).$$

Let us now prove that:
$$\forall S \succeq 0, \texttt{v}(\overline{OPF}_{S}) \leq \texttt{v}(SDP).$$

Let $\bar{X}$ be a solution to $(SDP)$.
Let us show that $(\bar{x} = 0,\bar{Y} = \bar{X})$ is a feasible solution to $(\overline{OPF}_S)$ with a lower objective value.

The inequalities $<A_k,\bar{Y}> \leq b_k, k \in K$ are trivially satisfied.
Let us now prove that (\ref{mc1})-(\ref{mc4}) are satisfied.
Which amounts to prove:
\begin{align}
\label{p1}\bar{X}_{i,j} \leq V_i^{max}V_j^{max}, (i,j) \in E, \\
\label{p2}\bar{X}_{i,j} \geq -V_i^{max}V_j^{max}, (i,j) \in E.
\end{align}

When $i=j$, as $\bar{X}\succeq 0$, then for all $i \in \{1, \ldots, 2n\}, \bar{X}_{i,i} \geq 0$.
Moreover, from (\ref{ball_constraints}):
$$\forall i \in \{1,\ldots,n\}: \bar{X}_{i,i} + \bar{X}_{i+n,i+n} \leq (V_i^{max})^2.$$

\noindent And therefore:
\begin{small}
\begin{equation}\label{ii}\bar{X}_{i,i} \leq (V_i^{max})^2 \mbox{ and } \bar{X}_{i+n,i+n} \leq (V_i^{max})^2.\end{equation}
\end{small}
When $(i \neq j)$, as $\bar{X} \succeq 0, \bar{X}_{i,i}\bar{X}_{j,j} - \bar{X}_{i,j}^2 \geq 0$.
From (\ref{ii}), it follows that $\bar{X}_{i,j}^2 \leq (V_i^{max})^2(V_j^{max})^2$.
Then (\ref{p1}) and (\ref{p2}) are satisfied.

Let us now compare the objective solution values of $\bar{X}$ and $(\bar{x},\bar{Y})$:
$$<C,\bar{X}> - \bar{x}^tS\bar{x} - <C-S,\bar{Y}> = <S,\bar{X}> \geq 0$$ as $S$ and $\bar{X}$ are PSD.

Therefore $(\bar{x},\bar{Y})$ has a lower solution value than $\bar{X}$.

To sum up for any PSD matrix $S$, $(\overline{OPF}_S)$ has a lower optimal solution value than $(SDP)$.
\end{itemize}

We can now conclude that $\texttt{v}(\overline{OPF}_{S^*}) = \texttt{v}(SDP)$ and that $S^*$ maximizes the value of $(\overline{OPF}_{S})$.
\end{proof}

\begin{remark}
In the proof above, we demonstrate that any solution of $(SDP)$ "satisfies" the McCormick inequalities.
This is because bounds $\ell$ and $u$ were obtained with constraints (\ref{ball_constraints}) which are in $(SDP)$. See Section II.
\end{remark}

\begin{remark}
The optimal matrix $S^*$ is not unique, other matrices may give a root node relaxation with the same value.
\end{remark}

Proposition~2 ensures that the lower bound obtained at the root node of our branch-and-bound framework is equal to the rank relaxation bound.
For many test cases, this bound seems to be very sharp~\cite{arxiv}.
To solve $(SDP)$ and compute $S^*$ matrix, one can use the solver introduced in~\cite{arxiv,molzahn}.

Those results allow us to build the following algorithm to solve the OPF problem to global optimality:

\noindent\fbox{\parbox{\linewidth-2\fboxrule-2\fboxsep}{
\begin{enumerate}
\item Solve the rank relaxation and deduce optimal dual variables $\alpha^*$.
\item Define the PSD matrix $S^* = C + \sum_{k \in K} \alpha^*_k A_k$.
\item Solve $(OPF_{S^*})$ within a branch-and-bound algorithm.
\end{enumerate}
}}


\section{Solution within a branch-and-bound algorithm}

In the previous section we showed how to build an "optimal" reformulation of $(OPF)$ in the sense that it maximizes the lower bound at the root node of our branch-and-bound tree.
In this section we describe the second step of the \texttt{MIQCR} method: the solution within a branch-and-bound algorithm.
This algorithm is used to solve $(OPF_{S^*})$ and hence $(OPF)$.

Let us recall that a branch-and-bound is an enumeration tree used to solve an optimization problem.
Each node of the tree represents a sub problem of the original one.
There are multiple ways to divide the original problem into sub problems.
The classic way is to divide each variable interval into different subintervals, those branch-and-bound algorithms are called spatial branch-and-bound.
We choose to implement this type of branch-and-bound, that is why we need bounds $\ell$ and $u$ on variables $x$.
To sum up, at each node we modify values of $\ell$ and $u$ to build the sub problem.

A branch-and-bound implementation is defined by:

\begin{itemize}
\item Actions performed at each node of the tree,
\item Next node selection strategy.
\end{itemize}

\subsection{How to deal with a node ?}

At each node we solve $(\overline{OPF}_{S_*})$ where bounds $\ell$ and $u$ are different.
This change modifies the relaxation value since $\ell$ and $u$ are involved in the McCormick inequalities (\ref{mc1})-(\ref{mc4}).
We recall that this relaxation is \textit{convex and quadratic} and that it gives a lower bound of the node subproblem.

Next step depends on the result of the node relaxation:

\begin{itemize}
\item If the relaxation is infeasible: the branch is pruned.
\item If the lower bound from relaxation is greater than the best current upper bound: the branch is  pruned.
\item If the solution $(\bar{x},\bar{Y})$ from the relaxation satisfies constraint (\ref{bilinear}): $\bar{x}$ is then a solution of $(OPF)$, the branch is pruned and the upper bound is potentially updated.
\item Else: two new nodes are built as children of the current node.
\end{itemize} 

\noindent \textbf{About branching}: To build two children nodes from a parent node, a branching variable $(x_b)$ and a branching value $(x_b^s)$ are chosen.

\textit{Variable selection strategy}:
$x_b$ is chosen among variables $(i_b,j_b)$ that violate the most Equality (\ref{bilinear}) (for the Euclidian norm).

\textit{Interval division}:
$x_b^s$ is chosen between the middle of the current interval of variable $x_b$ (denoted $x_b^m$) and $\bar{x}_b$, the value of $x_b$ in the node relaxation solution.
Let $\delta$ be a parameter between 0 and 1:
$$x_b^s = \delta \bar{x}_b + (1-\alpha) x_b^m.$$

Lower bounds on the local optimal solution value are computed at each node, however we need to find upper bounds and we cannot rely on relaxations results to do so.
That is why every three nodes, a heuristic, consisting in finding a point satisfying first order optimality conditions of $(OPF)$, is launched.

\subsection{How to select the following node ?}

At each node is attached a \textit{potential} which is the lower bound found at its parent node.
At this node the relaxation value cannot be lower than this \textit{potential}.

Our next node selection strategy is designed as a "best-first" strategy.
Indeed we want to see the global lower bound of the tree (which is the minimum among the \textit{potentials} of the leafs in the tree) increase.
To do so, we select the node with the lowest \textit{potential} as the following node to handle.

Branch-and-bound algorithm is terminated when the relative difference between the lowest upper bound (the best feasible solution) and the global lower bound is less than an $\epsilon$-value.


\section{Numerical experiments}

To illustrate our method, we present results on a batch of 20 OPF instances involving networks from 2 to 1354 nodes.
All these instances come from the MATPOWER repository~\cite{matpower}, except the "WB" and "LMBM" instances that come from~\cite{bukhsh}.

In our experiments (as in~\cite{arxiv}) we do not consider any constraints on current magnitudes, and we only consider the linear part on the active power cost.

For each instance, we launch our implementation of \texttt{MIQCR} along with the Baron solver~\cite{baron}, keeping default options.

For \texttt{MIQCR}, the rank relaxation is solved with the Mosek solver~\cite{mosek} and a chordal decomposition to exploit the problem sparsity. 
The \textit{quadratic and convex} relaxation at each branch-and-bound node is solved with the Xpress~\cite{xpress} solver.
Heuristic performed at each three nodes is based on the solution of the first order optimality conditions of $(OPF)$ with the Knitro solver~\cite{knitro,knitro2}.
To perform computation we use the following parameters:
\begin{center}
\begin{tabular}{|l|l|}
\hline
Absolute feasible tolerance for the heuristic & $10^{-5}$\\ \hline
Final $\epsilon$ gap & $10^{-5}$\\ \hline
$\delta$ parameter to perform branching & 0.5 \\ \hline
\end{tabular}
\end{center}

Hence a 0\% gap means that the gap is under the $\epsilon$ parameter value.
The choice of $\delta = 0.5$ came after some numerical experiments along with the reading of~\cite{belotti2009branching}.

\begin{small}
\begin{table}
	\begin{center}
	\begin{tabular}{|c|c|c|c|c|}
	\hline
	& \multicolumn{2}{|c|}{MIQCR} & \multicolumn{2}{|c|}{BARON} \\ \hline
	\textit{Name} & \textit{Root gap} & \textit{Gap-Time} & \textit{Root gap} & \textit{Gap-Time} \\ \hline
	\textit{WB2} & 2.2\% & 1s & 2.2\% & 1s \\ \hline
	\textit{LMBM3} & 0\% & 1s & 0\% & 1s \\ \hline
	\textit{WB3} & 0\% & 1s & 0\% & 1s \\ \hline
	\textit{WB5} & 16.7\% & 23s & 16.8\% & 1.92s \\ \hline
	\textit{6ww} & 0\% & 1s & 0.2\% & 1s \\ \hline
	\textit{9} & 0\% & 1s & 0\% & 1.34s \\ \hline
	\textit{9mod} & 0.1\% & (0.1\%)  & 12.4\% & 5.83s \\ \hline
	\textit{14} & 0\% & 1s & 100\% & (21.1\%) \\ \hline
	\textit{22loop} & 0\% & 1s & 31.6\% & (31.6\%) \\ \hline
	\textit{30} & 0\% & 1s & 100\% & (100\%) \\ \hline
	\textit{39} & 0\% & 2s & 100\% & (100\%) \\ \hline
	\textit{39mod1} & 0.1\% & (0.1\%) & 100\% & (100\%) \\ \hline
	\textit{39mod2} & 0.1\% & (0.1\%) & 100\% & (100\%) \\ \hline
	\textit{57} & 0\% & 1s & 100\% & (100\%) \\ \hline
	\textit{89pegase} & 0\% & 2s & 72\% & (72\%) \\ \hline
	\textit{118} & 0\% & 3s & 100\% & (100\%) \\ \hline
	\textit{118mod} & 0\% & 7s & 100\% & (100\%) \\ \hline
	\textit{300} & 0\% & 10s & 100\% & (100\%) \\ \hline
	\textit{300mod} & 0.1\% & (0.1\%) & 100\% & (100\%) \\ \hline
	\textit{1354pegase} & 0\% & 204s & 69\% & (69\%) \\ \hline
	\end{tabular}
	\end{center}
	\caption{Results on 20 OPF test cases, with a time limit of 300s}\label{table1}
\end{table}
\end{small}
Table~1 presents numerical results where each line refers to one OPF test case.
In the first column, each instance name contains the number of nodes in the associated electric network. We recall that in $(OPF)$ the number of real variables is twice the number of nodes.
For each solver, the \textit{Root gap} column gives the relative gap between the best known solution for the instance and the lower bound found at the root node of the branch-and-bound tree, i.e., if the best known solution is denoted by $UB$ and the lower bound by $LB$, the gap equals $\frac{UB-LB}{UB}$.
The \textit{Gap-Time} column gives the execution time if global optimality is reached within five minutes of computation, if this is not the case, this column gives the relative final gap between the best known solution and the final lower bound.

On small test cases (under 10 nodes) the gap is closed within the branch-and-bound tree by both solvers, except for \textit{9mod} instance where \texttt{MIQCR} fails.
We observe that the root gap of \texttt{MIQCR} is very tight.
This reflects the quality of the SDP rank relaxation lower bound as already observed by several authors~\cite{arxiv,lavaei2012zero}.
On the contrary, the root gap obtained with the Baron solver~\cite{baron} is often very large for instances with more than 10 nodes.
Moreover its root gap is not improved during the five minutes of branch-and-bound computation.

Notice that a feasible solution is always found for each test case by \texttt{MIQCR}, and may be improved within the branch-and-bound.
For instance, for \textit{case300} first upper bound found equals 475783 and it is improved to 475462.2 during branch-and-bound tree iterations.

In~\cite{josz_sm_ca}, authors successfully use the Lasserre hierarchy~\cite{lasserre2009convexity} to also solve smallest instances of (OPF) problems to global optimality.
Although their method can be extended to deal with some well-conditioned larger instances~\cite{josz_these, josz}, it fails with largest generic ones.

\section{Conclusion}

In this paper we show how to adapt the \texttt{MIQCR} method to the OPF problem.
We prove that the well-known rank relaxation is sufficient to build an "optimal" quadratic reformulation of $(OPF)$.
This result can be extended to each quadratic problem with complex variables whose magnitudes are upper-bounded.

When solving this optimal reformulation within a branch-and-bound framework, we can close the gap between rank relaxation and known feasible solutions.

First numerical results are encouraging.
Future work consists in a specialization of the branch-and-bound framework in order to close the gap by raising the lower bound in larger instances of the OPF problem.
To do so we will focus on bound tightening techniques.


\end{document}